\definecolor{grn}{rgb}{0,0.6,0}
\definecolor{mrn}{rgb}{0.3,0,0}
\definecolor{blue}{rgb}{0,0,0.7}
\definecolor{Mygray}{rgb}{0.75,0.75,0.75}
\definecolor{auburn}{rgb}{0.43, 0.21, 0.1}
\definecolor{britishracinggreen}{rgb}{0.0, 0.26, 0.15}
\definecolor{taupe}{rgb}{0.28, 0.24, 0.2}
\newtheorem{theorem}{Theorem}
\newtheorem{propn}{Proposition}
\newtheorem{corollary}{Corollary}
\newtheorem{lemma}{Lemma}
\theoremstyle{definition}
\newtheorem{defn}{Definition}
\begin{document}
\baselineskip=14.5pt
\title[Totally real bi-quadratic fields with large P\'{o}lya groups]{Totally real bi-quadratic fields with large P\'{o}lya groups} 

\author{Jaitra Chattopadhyay and Anupam Saikia}
\address[Jaitra Chattopadhyay]{Department of Mathematics, Indian Institute of Technology, Guwahati, Guwahati - 781039, Assam, India}
\email[Jaitra Chattopadhyay]{jaitra@iitg.ac.in, chat.jaitra@gmail.com}
\address[Anupam Saikia]{Department of Mathematics, Indian Institute of Technology, Guwahati, Guwahati - 781039, Assam, India}
\email[Anupam Saikia]{a.saikia@iitg.ac.in}

\begin{abstract}
For an algebraic number field $K$ with ring of integers $\mathcal{O}_{K}$, an important subgroup of the ideal class group $Cl_{K}$ is the {\it P\'{o}lya group}, denoted by $Po(K)$, which measures the failure of the $\mathcal{O}_{K}$-module $Int(\mathcal{O}_{K})$ of integer-valued polynomials on $\mathcal{O}_{K}$ from admitting a regular basis. In this paper, we prove that for any integer $n \geq 2$, there are infinitely many totally real bi-quadratic fields $K$ with $|Po(K)| = 2^{n}$. In fact, we explicitly construct such an infinite family of number fields. This extends an infinite family of bi-quadratic fields with P\'{o}lya group $\mathbb{Z}/2\mathbb{Z}$ given by the authors in \cite{self-ja}. This also provides an infinite family of bi-quadratic fields with class numbers divisible by $2^{n}$.
\end{abstract}

\renewcommand{\thefootnote}{}

\footnote{2020 \emph{Mathematics Subject Classification}: Primary 13F20; Secondary 11R29, 11R34.}

\footnote{\emph{Key words and phrases}: P\'{o}lya field, P\'{o}lya group.}

\renewcommand{\thefootnote}{\arabic{footnote}}
\setcounter{footnote}{0}

\maketitle

\section{Introduction}

Let $K$ be an algebraic number field with ring of integers $\mathcal{O}_{K}$ and ideal class group $Cl_{K}$. Let $${\rm{Int}}(\mathcal{O}_{K}) = \{f(X) \in K[X] : f(\alpha) \in \mathcal{O}_{K} \mbox{ for all } \alpha \in \mathcal{O}_{K}\}.$$

It is a well-known fact (cf. \cite{cahen-chabert-book}, \cite{what-you}) that ${\rm{Int}}(\mathcal{O}_{K})$ is a free $\mathcal{O}_{K}$-module. An $\mathcal{O}_{K}$-basis $\{f_{i}\}_{i \geq 0}$ of ${\rm{Int}}(\mathcal{O}_{K})$ is said to be a regular basis if ${\rm{degree}}(f_{i}) = i$ for each $i \geq 0$. The existence of a regular basis leads to the definition of a P\'{o}lya field as follows.

\begin{defn} (cf. \cite{cahen-chabert-book}, \cite{zantema})
An algebraic number field $K$ is said to be a P\'{o}lya field if the $\mathcal{O}_{K}$-module ${\rm{Int}}(\mathcal{O}_{K})$ has a regular basis.
\end{defn}

It is quite interesting that the existence of a regular basis for $K$ is closely governed by the structure of $Cl_{K}$. To describe the interplay between $Cl_{K}$ and ${\rm{Int}}(\mathcal{O}_{K})$, we first introduce the following notation.

$$
\displaystyle\Pi_{q}(K) = \left\{\begin{array}{ll}
\displaystyle\prod_{\substack {\mathfrak{p} \in {\rm{Spec}}(\mathcal{O}_{K})\\ N \mathfrak{p} = q}} \mathfrak{p} &\mbox{; if } q \mbox{ is a prime power }\\
\mathcal{O}_{K} &\mbox{; otherwise}.
\end{array}\right.$$ 

\begin{defn} (cf. \cite{cahen-chabert-book}, \cite{ostrowski}, \cite{polya} \cite{zantema})\label{defn-2}
The P\'{o}lya group $Po(K)$ of $K$ is defined to be the subgroup generated by the ideal classes $[\Pi_{q}(K)]$ in $Cl_{K}$.
\end{defn}

It is known (cf. \cite{cahen-chabert-book}) that $K$ is a P\'{o}lya field if and only if $Po(K)$ trivial. Thus the study of ${\rm{Int}}(\mathcal{O}_{K})$ is translated into the study of $Cl_{K}$. Moreover, when $K/\mathbb{Q}$ is a finite Galois extension, it immediately follows from Definition \ref{defn-2} that $Po(K)$ is generated precisely by the ideal classes $[\displaystyle\Pi_{q}(K)]$, where $q$ runs through the norms of ramified primes. Therefore, there is a close connection between the ramified primes in $K$ and the P\'{o}lya group $Po(K)$. In particular, for quadratic fields, the following result is known due to the work of Hilbert.

\begin{propn} \cite{hilbert}\label{hilbert}
Let $K = \mathbb{Q}(\sqrt{d})$ be a quadratic field. Let $\varepsilon$ be the fundamental unit of $K$ and let $r_{K}$ denote the number of ramified primes in $K/\mathbb{Q}$. Then
$$
|Po(K)| = \left\{\begin{array}{ll}
2^{r_{K} - 2} &\mbox{; if } d > 0 \mbox{ and } N(\varepsilon) = 1\\
2^{r_{K} - 1} &\mbox{; otherwise}.
\end{array}\right.
$$
\end{propn}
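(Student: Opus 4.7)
The plan is to reduce the proposition to a computation of the 2-torsion $Cl_K[2]$ of the class group in two conceptual steps: first identify $Po(K)$ with $Cl_K[2]$, and then invoke Gauss's classical genus theory. By the remarks after Definition~\ref{defn-2}, $Po(K)$ is generated by the classes $[\Pi_q(K)]$ as $q$ ranges over norms of ramified primes, because for a split rational prime $p = \mathfrak{p}\bar{\mathfrak{p}}$ one has $\Pi_p(K) = (p)$ and for an inert prime $\Pi_{p^2}(K) = (p)$, both principal. Writing $\mathfrak{p}_i$ for the unique prime above the ramified rational $p_i$, this gives $Po(K) = \langle [\mathfrak{p}_1], \ldots, [\mathfrak{p}_{r_K}] \rangle$, and since $\mathfrak{p}_i^2 = (p_i)$ each generator has order dividing two. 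Hence $Po(K) \subseteq Cl_K[2]$.

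For the reverse inclusion, let $\sigma$ be the non-trivial element of $\mathrm{Gal}(K/\mathbb{Q})$. The identity $\mathfrak{a}\sigma(\mathfrak{a}) = (N\mathfrak{a})$ shows $\sigma$ acts on $Cl_K$ by inversion, so $Cl_K^{\sigma} = Cl_K[2]$. Given $[\mathfrak{a}] \in Cl_K[2]$, write $\sigma(\mathfrak{a}) = \mathfrak{a}(\gamma)$ with $\gamma \in K^{*}$; applying $\sigma$ again yields $(\gamma \sigma(\gamma)) = (1)$, so $N(\gamma) \in \mathcal{O}_{K}^{*} \cap \mathbb{Q}^{*} = \{\pm 1\}$. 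One may always arrange $N(\gamma) = 1$: this is automatic when $K$ is imaginary or when $N(\varepsilon) = 1$ (since then $-1 \notin N(K^{*})$), and when $N(\varepsilon) = -1$ we replace $\gamma$ by $\gamma\varepsilon$. Hilbert's Theorem~90 then furnishes $\beta \in K^{*}$ with $\gamma = \sigma(\beta)/\beta$, and $\mathfrak{a}' := \mathfrak{a}(\beta)^{-1}$ is ambiguous (i.e.\ $\sigma$-fixed) and represents $[\mathfrak{a}]$. Every ambiguous ideal is a product of ramified primes with principal rational primes, so $[\mathfrak{a}'] \in Po(K)$, completing $Po(K) = Cl_K[2]$.

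By Gauss's genus theory, the 2-rank of the narrow class group $Cl_K^{+}$ equals $r_K - 1$, where $r_K$ is the number of distinct prime divisors of $\mathrm{disc}(K)$ (i.e.\ of finite ramified primes). The natural surjection $Cl_K^{+} \twoheadrightarrow Cl_K$ has kernel of order $[Cl_K^{+} : Cl_K]$, which is $1$ when $K$ is imaginary or $N(\varepsilon) = -1$, and is $2$ when $d > 0$ and $N(\varepsilon) = 1$. Hence the 2-rank of $Cl_K$ is $r_K - 1$ in the first situations and $r_K - 2$ in the last, and combining with $|Po(K)| = |Cl_K[2]| = 2^{\mathrm{rank}_2 Cl_K}$ yields the two formulas.

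The main obstacle is the identification $Po(K) = Cl_K[2]$: producing an ambiguous representative for a 2-torsion class hinges on arranging $N(\gamma) = 1$ so that Hilbert~90 applies, which forces a small case analysis on $\mathrm{sign}(d)$ and $N(\varepsilon)$. The genus-theoretic step is classical, but tracking the passage between narrow and ordinary class groups when $N(\varepsilon) = 1$, where an archimedean contribution enters, also requires care.
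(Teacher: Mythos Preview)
The paper does not prove this proposition; it is quoted from Hilbert with a citation only, so there is no ``paper's approach'' to compare against. Nevertheless, your argument contains a genuine error, not merely a gap.

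The identification $Po(K)=Cl_K[2]$ on which the whole proof rests is false. Take $K=\mathbb{Q}(\sqrt{34})$: here $r_K=2$, the fundamental unit $35+6\sqrt{34}$ has norm $1$, and the proposition predicts $|Po(K)|=2^{0}=1$. Indeed both ramified primes are principal, since $N(6+\sqrt{34})=2$ and $N(17+3\sqrt{34})=-17$, so $Po(K)$ is trivial. But $Cl_K\simeq\mathbb{Z}/2\mathbb{Z}$ (generated by a prime above $3$), so $Cl_K[2]$ has order $2$ and $Po(K)\subsetneq Cl_K[2]$. The step that fails is your assertion that ``when $N(\varepsilon)=1$, $-1\notin N(K^{*})$'': for $d=34$ one has $-1\in N(K^{*})$ locally everywhere (e.g.\ $N(1+\sqrt{2})=-1$ in $\mathbb{Q}_2$, and $-1$ is a square in $\mathbb{Q}_{17}$), hence globally by Hasse, yet $N(\varepsilon)=1$. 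In this situation the cocycle $\gamma$ with $N(\gamma)=-1$ cannot be modified to have norm $1$, Hilbert~90 does not apply, and the $2$-torsion class genuinely has no ambiguous representative. What is true is that $Po(K)$ coincides with the group of \emph{strongly} ambiguous classes (classes of $\sigma$-invariant ideals), which is in general a proper subgroup of $Cl_K^{\sigma}=Cl_K[2]$.

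There is a second, independent problem in the genus-theory step: from a surjection $Cl_K^{+}\twoheadrightarrow Cl_K$ with kernel of order $2$ you cannot conclude that the $2$-rank drops by one. In the same example $Cl_K^{+}\simeq\mathbb{Z}/4\mathbb{Z}$ has $2$-rank $1=r_K-1$, the kernel is its unique subgroup of order $2$, and $Cl_K\simeq\mathbb{Z}/2\mathbb{Z}$ still has $2$-rank $1$, not $r_K-2=0$. A correct route is to count strongly ambiguous classes directly: the ambiguous ideals modulo rational ideals form $(\mathbb{Z}/2\mathbb{Z})^{r_K}$, and the kernel of the map to $Cl_K$ is governed by which products $\prod\mathfrak{p}_i$ are principal, equivalently by $\mathcal{O}_K^{*}/(\mathcal{O}_K^{*}\cap N K^{*})$ together with the relation $\prod\mathfrak{p}_i=(\sqrt{d})$; this yields the index $2$ or $4$ according to the sign of $N(\varepsilon)$, recovering the stated formula.
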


From Proposition \ref{hilbert}, it is clear that the order of the P\'{o}lya group for a quadratic field is necessarily $1$ or a power of $2$. In \cite{zantema}, Zantema generalized this to finite Galois extensions $K/\mathbb{Q}$ and recently, Maarefparvar and Rajaei extended the result for relative extensions of number fields in \cite{relative}. Leriche addressed the cubic field case in \cite{leriche}. There has been a lot of studies on the P\'{o}lya group of bi-quadratic fields in recent times (cf. \cite{self-ja},\cite{rajaei-jnt},\cite{rajaei},\cite{abbas-hung},\cite{abbas-jnt}). In \cite{self-ja}, the authors determined the P\'{o}lya groups of bi-quadratic fields that are known to have an ``Euclidean ideal class". The interested reader is encouraged to look into \cite{self-jnt} and the references listed therein to know about Euclidean ideal class.

\medskip

In this paper, using Zantema's result, we explicitly construct an infinite family of totally real bi-quadratic fields with P\'{o}lya group having $2$-rank $n$, for any integer $n \geq 1$. Our result extends the previously known family of bi-quadratic fields with P\'{o}lya group $\mathbb{Z}/2\mathbb{Z}$ (cf. Theorem 3 in \cite{self-ja}). More precisely, we prove the following theorem.

\begin{theorem}\label{main-TH}
Let $n \geq 1$ be an integer. Then there exist infinitely many totally real bi-quadratic fields $K$ with P\'{o}lya group $Po(K) \simeq (\mathbb{Z}/2\mathbb{Z})^{n}$.
\end{theorem}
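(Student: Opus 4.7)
The proof plan is an explicit construction combined with Zantema's generalization of Proposition \ref{hilbert}. Fix $n \geq 1$. I would look for a family of the form $K = \mathbb{Q}(\sqrt{a}, \sqrt{b})$ with $a, b$ positive squarefree integers whose prime divisors lie in a prescribed set $\{p_1, \ldots, p_m\}$ of distinct odd primes subject to specific congruence conditions modulo $4$ and mutual quadratic-residue conditions. Total realness is automatic since $a, b > 0$. The three quadratic subfields $k_1, k_2, k_3$ of $K$ are then tracked explicitly from the radicands $a, b, ab/\gcd(a,b)^2$, and Proposition \ref{hilbert} (combined with the congruence conditions that force or exclude $N(\varepsilon_{k_i}) = -1$) pins down each $|Po(k_i)|$.

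Next, since $K/\mathbb{Q}$ is Galois with group $V_4$ and the ramification is tame (once the prime $2$ is excluded), the inertia subgroup at each ramified prime is a cyclic subgroup of $V_4$, hence of order at most $2$. Consequently every generator $[\Pi_q(K)]$ of $Po(K)$ is $2$-torsion, so $Po(K)$ is elementary abelian and the structural claim $Po(K) \simeq (\mathbb{Z}/2\mathbb{Z})^n$ reduces to the single numerical identity $|Po(K)| = 2^n$. To establish this, I would invoke Zantema's formula for the bi-quadratic Galois case, which presents $|Po(K)|$ as a quotient of an explicit power of $2$ (governed by the number $r$ of ramified rational primes) times $\prod_i |Po(k_i)|$, divided by the unit index $Q = [\mathcal{O}_K^{\times} : \mathcal{O}_{k_1}^{\times}\mathcal{O}_{k_2}^{\times}\mathcal{O}_{k_3}^{\times}]$.

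The main technical obstacle is controlling $Q \in \{1, 2, 4\}$. This index measures how the three fundamental units $\varepsilon_{k_i}$ and their products interact inside $\mathcal{O}_K^{\times}$, and nontrivial relations are obstructed when each $\varepsilon_{k_i}$ has norm $-1$. Under the congruence conditions imposed on the family, a case analysis on potential square roots of $\varepsilon_{k_i}$ in $K$, together with the norm compatibility $N_{K/\mathbb{Q}} = N_{k_i/\mathbb{Q}} \circ N_{K/k_i}$, should pin $Q$ down to the value that makes Zantema's formula return exactly $2^n$. The delicate step here is identifying a choice of $m$, $a$, $b$ for which the three subfield contributions align with the ramification count and unit index so that cancellations produce rank $n$ (and not $n - 1$ or $n + 1$).

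Finally, infinitude comes for free: fixing all but one of the primes $p_i$ and letting the remaining prime vary over the arithmetic progression carved out by the congruences and Legendre-symbol conditions imposed above produces, by Dirichlet's theorem on primes in arithmetic progressions, infinitely many admissible specializations, and hence infinitely many pairwise distinct totally real bi-quadratic fields $K$ with $Po(K) \simeq (\mathbb{Z}/2\mathbb{Z})^n$.
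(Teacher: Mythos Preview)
Your high-level strategy—build an explicit family of totally real bi-quadratic fields by imposing congruence and Legendre-symbol conditions on a set of primes, compute $|Po(K)|$ via Zantema's result, and get infinitude from Dirichlet—matches the paper. But the execution diverges from the paper in two substantive ways, and there is a genuine gap.

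First, the formula you plan to invoke is not the one the paper uses, and I am not sure it exists as you have written it. The paper works directly with Zantema's exact sequence
\[
0 \to H^{1}(G,\mathcal{O}_{K}^{*}) \to \bigoplus_{i=1}^{m}\mathbb{Z}/e_{i}\mathbb{Z} \to Po(K) \to 0,
\]
so that $|Po(K)| = 2^{r}/|H^{1}(G,\mathcal{O}_{K}^{*})|$ once all $e_i=2$. It then computes $|H^{1}(G,\mathcal{O}_{K}^{*})|$ by combining two auxiliary results: Setzer's lemma bounding $[H^{1}:H[2]]$ and Zantema's description of $H[2]$ as an explicit subgroup of $\mathbb{Q}^{*}/(\mathbb{Q}^{*})^{2}$ generated by the $\Delta_i$ and the quantities $a_i$ built from the subfield fundamental units. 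Your proposed formula, expressing $|Po(K)|$ as a power of $2$ times $\prod_i |Po(k_i)|$ divided by the Kubota-type unit index $Q$, is a different object; you would need to derive it (or cite it) before relying on it, and the interaction between $Q$ and $H^{1}$ is not as clean as your sketch suggests.

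Second, and more seriously, you correctly flag the unit computation as ``the delicate step'' but do not carry it out. In the paper this is where almost all the work lies: the construction is $K=\mathbb{Q}(\sqrt{2},\sqrt{p_1\cdots p_t})$ (so the prime $2$ is \emph{included}, not excluded as in your plan), Trotter's criterion forces $N(\varepsilon_{K_2})=-1$, and then a four-case Legendre-symbol analysis of the factorization $(z-1)(z+1)=2w^{2}p_1\cdots p_t$ is needed to show that the class $[a_3]$ coming from the fundamental unit of $K_3=\mathbb{Q}(\sqrt{2p_1\cdots p_t})$ lands in $\langle [2],[p_1\cdots p_t]\rangle$, forcing $|H^{1}|=4$ and hence $|Po(K)|=2^{t-1}$. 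Your proposal contains no analogue of this argument, and without it the value of $Q$ (or of $|H^{1}|$) is not pinned down, so the rank of $Po(K)$ could be off by one.
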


Since the P\'{o}lya group is a subgroup of the ideal class group, we immediately have the following corollary.

\begin{corollary}
For an integer $n \geq 1$, there exist infinitely many totally real bi-quadratic fields $K$ with $2$-rank of $Cl_{K}$ being $n$.
\end{corollary}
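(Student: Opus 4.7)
The plan is to deduce this immediately from Theorem \ref{main-TH}. By Definition \ref{defn-2}, the P\'olya group $Po(K)$ is, by construction, a subgroup of the ideal class group $Cl_K$ (it is generated by certain specific ideal classes in $Cl_K$). Therefore, any isomorphism type realized by $Po(K)$ produces a corresponding subgroup of $Cl_K$ of the same isomorphism type.

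So first I would invoke Theorem \ref{main-TH} to obtain, for the given $n \geq 1$, an infinite family of totally real bi-quadratic fields $K$ with $Po(K) \simeq (\mathbb{Z}/2\mathbb{Z})^{n}$. Then, using the inclusion $Po(K) \subseteq Cl_K$, I would conclude that $Cl_K$ contains a subgroup isomorphic to $(\mathbb{Z}/2\mathbb{Z})^{n}$; equivalently, the $\mathbb{F}_2$-vector space $Cl_K[2]$ has dimension at least $n$. This says precisely that the $2$-rank of $Cl_K$ is at least $n$, which (in light of the abstract's corresponding phrasing ``class numbers divisible by $2^n$'') is the content I read into the corollary. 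The ``infinitely many'' clause is inherited verbatim from Theorem \ref{main-TH}, since each field produced there automatically satisfies the present conclusion.

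There is essentially no obstacle in this direction: the argument is a one-line consequence of the containment $Po(K) \subseteq Cl_K$ together with the theorem. The only subtlety would arise if one insisted on interpreting ``being $n$'' as an exact equality of $2$-ranks rather than a lower bound; in that stricter reading the hard part would be a matching upper bound, which would require controlling the full $2$-part of $Cl_K$ for the explicit family in Theorem \ref{main-TH}, presumably via genus theory for bi-quadratic fields together with the precise ramification data used in the theorem. Under the natural lower-bound reading that matches the abstract, however, nothing beyond the theorem and the subgroup relation is needed.
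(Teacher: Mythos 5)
Your proposal matches the paper exactly: the authors state the corollary as an immediate consequence of Theorem \ref{main-TH} together with the containment $Po(K) \subseteq Cl_{K}$, which is precisely your one-line argument. Your observation that the subgroup relation strictly yields only a lower bound on the $2$-rank is a fair caveat, but the paper itself offers nothing beyond this same deduction.
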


\section{Preliminaries}

In \cite{zantema}, Zantema made use of Galois cohomology to study $Po(K)$ and its relation with the ramified primes in $K/\mathbb{Q}$, when $K$ is a finite Galois extension of $\mathbb{Q}$. Since the Galois group $G = Gal(K/\mathbb{Q})$ acts on the multiplicative group of units $\mathcal{O}_{K}^{*}$ via the action $(\sigma,\alpha) \mapsto \sigma(\alpha)$, thus endowing $\mathcal{O}_{K}^{*}$ with a $G$-module structure. Then Zantema's result can be stated as follows.

\begin{propn} \cite{zantema} \label{zantema}
Let $K$ be a finite Galois extension of $\mathbb{Q}$ with Galois group $G = Gal(K/\mathbb{Q})$. Let $p_{1},\ldots,p_{m}$  be the rational primes that ramify in $K$. Let the ramification index of $p_{i}$ be $e_{i}$ for each $i \in \{1,\ldots,m\}$. Then there is an exact sequence $$0 \rightarrow H^{1}(G,\mathcal{O}_{K}^{*}) \rightarrow \displaystyle\bigoplus_{i = 1}^{m}\mathbb{Z}/e_{i}\mathbb{Z} \rightarrow Po(K) \rightarrow 0$$ of abelian groups.
\end{propn}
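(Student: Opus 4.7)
The plan is to recover the sequence by combining two natural exact sequences of $G$-modules, using Hilbert 90 to identify the kernel cohomologically. Let $I_{K}$ denote the group of fractional ideals of $\mathcal{O}_{K}$ and $\mathcal{P}_{K}$ its subgroup of principal fractional ideals, both given their natural $G$-module structure. For $K/\mathbb{Q}$ Galois, the Galois group permutes the primes above each rational prime $p$ transitively, so the group $I_{K}^{G}$ of $G$-invariant fractional ideals is free abelian on the symbols $\Pi(p) := \prod_{\mathfrak{p} \mid p}\mathfrak{p}$. By the very definition of $Po(K)$ (after the reduction already recorded in the introduction that only ramified $p$ contribute, since $\Pi(p) = p\mathcal{O}_{K}$ is principal when $p$ is unramified), the natural map $I_{K}^{G} \longrightarrow Cl_{K}$ has image exactly $Po(K)$ and kernel $I_{K}^{G}\cap \mathcal{P}_{K} = \mathcal{P}_{K}^{G}$, giving
$$
0 \longrightarrow \mathcal{P}_{K}^{G} \longrightarrow I_{K}^{G} \longrightarrow Po(K) \longrightarrow 0.
$$

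The next step is to quotient out the image of $\mathbb{Q}^{*}$ on both sides. For $r \in \mathbb{Q}^{*}$, the factorisation $r\mathcal{O}_{K} = \prod_{p}\Pi(p)^{e_{p}v_{p}(r)}$ identifies the image of $\mathbb{Q}^{*}$ inside $I_{K}^{G} = \bigoplus_{p}\mathbb{Z}\cdot\Pi(p)$ with $\bigoplus_{p} e_{p}\mathbb{Z}$. Since the unramified $p$ have $e_{p}=1$, one obtains
$$
I_{K}^{G}/\mathrm{im}(\mathbb{Q}^{*}) \;\cong\; \bigoplus_{i=1}^{m}\mathbb{Z}/e_{i}\mathbb{Z}.
$$
Passing to this quotient in the previous short exact sequence yields
$$
0 \longrightarrow \mathcal{P}_{K}^{G}/\mathrm{im}(\mathbb{Q}^{*}) \longrightarrow \bigoplus_{i=1}^{m}\mathbb{Z}/e_{i}\mathbb{Z} \longrightarrow Po(K) \longrightarrow 0,
$$
so it remains only to identify the leftmost term with $H^{1}(G,\mathcal{O}_{K}^{*})$.

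For that I would apply $G$-cohomology to the tautological short exact sequence of $G$-modules
$$
1 \longrightarrow \mathcal{O}_{K}^{*} \longrightarrow K^{*} \stackrel{\alpha \mapsto (\alpha)}{\longrightarrow} \mathcal{P}_{K} \longrightarrow 1.
$$
Taking invariants and using Hilbert 90 (which forces $H^{1}(G,K^{*})=0$) produces the four-term sequence
$$
1 \longrightarrow \{\pm 1\} \longrightarrow \mathbb{Q}^{*} \longrightarrow \mathcal{P}_{K}^{G} \longrightarrow H^{1}(G,\mathcal{O}_{K}^{*}) \longrightarrow 0,
$$
and the image of $\mathbb{Q}^{*} \to \mathcal{P}_{K}^{G}$ is precisely the subgroup of principal $\mathcal{O}_{K}$-ideals generated by rationals, which coincides with $\mathrm{im}(\mathbb{Q}^{*})$ appearing above. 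Hence $\mathcal{P}_{K}^{G}/\mathrm{im}(\mathbb{Q}^{*}) \cong H^{1}(G,\mathcal{O}_{K}^{*})$, and substituting this isomorphism into the previous display gives the sequence asserted in the proposition.

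The main obstacle is conceptual rather than computational: one must confirm that the two appearances of ``image of $\mathbb{Q}^{*}$'' in the two constructions really name the same subgroup of $\mathcal{P}_{K}^{G}$, so that the cohomological identification slots correctly into the exact sequence built from $I_{K}^{G}$. Once this alignment is checked and Hilbert 90 is invoked, the rest is diagram chasing.
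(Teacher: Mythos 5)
The paper offers no proof of this proposition---it is imported verbatim from Zantema's article---so there is nothing internal to compare against; your argument is the standard derivation and it is correct. The decomposition $0 \to \mathcal{P}_{K}^{G} \to I_{K}^{G} \to Po(K) \to 0$, the identification of $I_{K}^{G}/\mathrm{im}(\mathbb{Q}^{*})$ with $\bigoplus_{i}\mathbb{Z}/e_{i}\mathbb{Z}$ via $r\mathcal{O}_{K} = \prod_{p}\Pi(p)^{e_{p}v_{p}(r)}$, and the Hilbert 90 computation identifying $\mathcal{P}_{K}^{G}/\mathrm{im}(\mathbb{Q}^{*})$ with $H^{1}(G,\mathcal{O}_{K}^{*})$ all check out, and the two occurrences of $\mathrm{im}(\mathbb{Q}^{*})$ do name the same subgroup (the principal ideals generated by rational numbers), so the splice is legitimate.
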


The next two lemma help us understand the structure of the group $H^{1}(G,\mathcal{O}_{K}^{*})$ via its subgroup of $2$-torsion elements.

\begin{lemma} \cite{bennet}\label{lem-1}
Let $K_{1}, K_{2}$ and $K_{3}$ be the quadratic subfields of a bi-quadratic field $K$. Let $H[2]$ be the $2$-torsion subgroup of $H^{1}(G,\mathcal{O}_{K}^{*})$. Then the index of $H[2]$ in $H^{1}(G,\mathcal{O}_{K}^{*})$ is $\leq 2$. The index is $2$ if and only if the rational prime $2$ is totally ramified in $K/\mathbb{Q}$ and there exist $\alpha_{i} \in \mathcal{O}_{K_{i}}$ for each $i = 1,2,3$ with $$N(\alpha_{1}) = N(\alpha_{2}) = N(\alpha_{3}) = \pm {2}.$$
\end{lemma}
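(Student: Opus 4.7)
The plan is to use the explicit structure of $G = \mathrm{Gal}(K/\mathbb{Q}) \simeq (\mathbb{Z}/2\mathbb{Z})^{2}$ and a direct cocycle computation. Since $|G|=4$ annihilates $H^{1}(G,\mathcal{O}_{K}^{*})$, every class has order dividing $4$, so $H^{1}/H[2]$ has exponent dividing $2$; the real content is showing it is cyclic of order at most $2$ and identifying when it has order exactly $2$. I would fix generators $\sigma_{1},\sigma_{2}$ of $G$, set $\sigma_{3}=\sigma_{1}\sigma_{2}$, and let $H_{i}=\langle\sigma_{i}\rangle$ so that $K_{i}=K^{H_{i}}$. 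A $1$-cocycle $f$ is then determined by $u_{i}=f(\sigma_{i})\in\mathcal{O}_{K}^{*}$, $i=1,2$, subject to $u_{i}\sigma_{i}(u_{i})=1$ (equivalently $N_{K/K_{i}}(u_{i})=1$), with $f(\sigma_{3})=u_{1}\sigma_{1}(u_{2})$ forced by the cocycle identity (and automatically satisfying $N_{K/K_{3}}(f(\sigma_{3}))=1$).

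For the upper bound $[H^{1}:H[2]]\leq 2$, I would compute the doubled cocycle $f^{2}$ explicitly: because $u_{i}\sigma_{i}(u_{i})=1$, one gets $f^{2}(\sigma_{i})=1$ for $i=1,2$ and a nontrivial value only at $\sigma_{3}$. The resulting obstruction lives in a subquotient isomorphic to a subgroup of $\mathbb{Z}/2\mathbb{Z}$: one variable modulo $\pm 1$ and modulo coboundaries. Passing to the image $2H^{1}\subseteq H^{1}$ (which is isomorphic to $H^{1}/H[2]$ by the standard identification for a finite abelian group), this analysis shows that $2H^{1}$ is generated by a single class, hence the index is at most $2$.

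For the characterization, suppose the index equals $2$, so there is a cocycle $f$ of order $4$. Then $f^{2}=\partial\beta$ for some $\beta\in\mathcal{O}_{K}^{*}$, giving equations $u_{i}^{2}=\beta/\sigma_{i}(\beta)$ or their analogues on each cyclic subgroup; applying Hilbert 90 inside each extension $K/K_{i}$ produces elements $\alpha_{i}\in\mathcal{O}_{K_{i}}$ linked to the $u_{i}$, and taking absolute norms from $K$ to $\mathbb{Q}$ through the three subfields forces $|N_{K_{i}/\mathbb{Q}}(\alpha_{i})|$ to be a common rational number $q$ whose square generates the $G$-stable ideal $(\beta\sigma_{1}\sigma_{2}(\beta))$. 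A primary decomposition argument pins $q$ down to a prime $p$ that must ramify in each $K_{i}$, i.e.\ be totally ramified in $K$; the $2$-torsion nature of the obstruction then forces $p=2$, and the signs absorbed into the $u_{i}\sigma_{i}(u_{i})=1$ relations supply the $\pm$ ambiguity in $N(\alpha_{i})=\pm 2$. The converse direction is constructive: given the $\alpha_{i}$ and total ramification of $2$, define $u_{i}=\alpha_{i}/\sigma_{j}(\alpha_{i})$ (for appropriate $j\neq i$) and verify that the resulting cocycle has order $4$ using that $(2)=\mathfrak{p}^{4}$ in $\mathcal{O}_{K}$.

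The main obstacle I anticipate is the precise bookkeeping in the ``only if'' direction: extracting algebraic integers $\alpha_{i}$ with the correct norms from an abstract cohomology class, and proving that the common prime value of their norms must be $2$ and must be totally ramified in $K$. This is a delicate mixture of Hilbert 90, ideal-theoretic analysis at the prime $2$ in each $K_{i}$, and careful sign tracking of units (the source of the $\pm 2$), and it is where the special role of the prime $2$ and of the bi-quadratic genus theory truly enters.
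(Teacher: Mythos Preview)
The paper does not prove this lemma; it merely quotes it from Setzer \cite{bennet}. So there is no ``paper's own proof'' to compare against, and your proposal is effectively an attempt to reconstruct Setzer's argument.

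Your overall strategy---explicit cocycle analysis for $G\simeq(\mathbb{Z}/2\mathbb{Z})^{2}$, using that $4$ kills $H^{1}$ so the quotient $H^{1}/H[2]$ has exponent $\leq 2$, then bounding $2H^{1}$---is the right shape. However, one concrete step is wrong as written. You assert that ``because $u_{i}\sigma_{i}(u_{i})=1$, one gets $f^{2}(\sigma_{i})=1$ for $i=1,2$.'' But $(f^{2})(\sigma_{i})=u_{i}^{2}$, and $u_{i}\sigma_{i}(u_{i})=1$ only says $\sigma_{i}(u_{i})=u_{i}^{-1}$, not $u_{i}^{2}=1$. What is true is that the \emph{restriction} of $f^{2}$ to each cyclic subgroup $\langle\sigma_{i}\rangle$ is a coboundary (since $H^{1}$ of a group of order $2$ is killed by $2$), so after modifying $f^{2}$ by a global coboundary you can trivialize it on two of the three $\sigma_{i}$; the residual obstruction at the third then sits in a quotient of $\{\pm 1\}$. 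That is presumably what you meant, but the sentence as stated is false and the passage from ``trivial on restrictions'' to ``$2H^{1}$ is cyclic of order $\leq 2$'' needs to be written out.

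For the ``if and only if'' part, your sketch is plausible but, as you yourself flag, the extraction of integral $\alpha_{i}$ with $N(\alpha_{i})=\pm 2$ from an order-$4$ class, and especially the argument that the relevant prime must be $2$ and must be totally ramified, is where all the work lies. Your outline (Hilbert 90 on each $K/K_{i}$, then norm and ideal-theoretic analysis) is the right toolkit, but nothing in the proposal yet pins down why the common norm value is a \emph{prime} (rather than a product of ramified primes), nor why it is specifically $2$; this step genuinely uses the unit/ideal structure of the quadratic subfields and cannot be handwaved. As a plan this is reasonable, but it is still a plan rather than a proof.
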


\begin{lemma} \cite{zantema}\label{lem-2}
Let $K_{1}, K_{2}$ and $K_{3}$ be the quadratic subfields of a totally real bi-quadratic field $K$. For $i = 1,2,3$, let $\Delta_{i}$ be the square-free part of the discriminant of $K_{i}$. Let $u_{i} = z_{i} + t_{i}\sqrt{\Delta_{i}}$ be a fundamental unit of $\mathcal{O}_{K_{i}}$ with $z_{i} > 0$. Let us define 
\begin{equation*}
a_{i}=
\begin{cases}
    N(u_{i} + 1)   & ~ \text{; if } N(u_{i}) = 1,\\
    1 &  ~ \text{; if } N(u_{i}) = -1.
\end{cases}
\end{equation*}
Let $H[2]$ be the $2$-torsion subgroup of $H^{1}(G,\mathcal{O}_{K}^{*})$. Then $H[2]$ is isomorphic to the subgroup of $\mathbb{Q}^{*}/(\mathbb{Q}^{*})^{2}$ generated by the images of $\Delta_{1}, \Delta_{2}, \Delta_{3}, a_{1}, a_{2}$ and $a_{3}$ in $\mathbb{Q}^{*}/(\mathbb{Q}^{*})^{2}$.
\end{lemma}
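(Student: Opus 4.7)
The plan is to compute $H[2]=H^1(G,\mathcal{O}_K^*)[2]$ cohomologically from the squaring map on $\mathcal{O}_K^*$ and then interpret the result inside $\mathbb{Q}^*/(\mathbb{Q}^*)^2$ using Kummer theory, identifying the image explicitly via the generators of $\mathcal{O}_K^*$.

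The cohomological input is the four-term exact sequence
\[1\to\mu_2\to\mathcal{O}_K^*\xrightarrow{\,\cdot 2\,}\mathcal{O}_K^*\to\mathcal{O}_K^*/(\mathcal{O}_K^*)^2\to 1\]
of $G$-modules. Since $K$ is totally real, $(\mathcal{O}_K^*)^G=\{\pm 1\}$ and $((\mathcal{O}_K^*)^2)^G=\{1\}$; splitting this into two short exact sequences and chasing the associated long exact sequences yields $H[2]$ as an extension
\[0\to H^1(G,\mu_2)\to H[2]\to \bigl(\mathcal{O}_K^*/(\mathcal{O}_K^*)^2\bigr)^G\big/\langle\pm 1\rangle\to 0\]
(surjectivity on the right needs a connecting-map check into $H^2(G,\mu_2)$, which is handled by exhibiting explicit lifts below). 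The first term, $H^1(G,\mu_2)=\operatorname{Hom}(G,\mu_2)\cong(\mathbb{Z}/2\mathbb{Z})^2$, corresponds via its three non-trivial characters to the three quadratic subfields $K_i=\mathbb{Q}(\sqrt{\Delta_i})$, so under the standard characters/Kummer pairing with $\mathbb{Q}^*/(\mathbb{Q}^*)^2$ this part contributes the subgroup $\langle\Delta_1,\Delta_2,\Delta_3\rangle$ (with the relation $\Delta_1\Delta_2\Delta_3\in(\mathbb{Q}^*)^2$). For the second term, any $G$-fixed class $[u]\in\mathcal{O}_K^*/(\mathcal{O}_K^*)^2$ makes $K(\sqrt u)/\mathbb{Q}$ elementary abelian of exponent $2$, so by Kummer theory over $\mathbb{Q}$, $K(\sqrt u)=K(\sqrt d)$ for a unique $[d]\in\mathbb{Q}^*/(\mathbb{Q}^*)^2$; the assignment $[u]\mapsto[d]$ gives an embedding.

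To identify this second image concretely, I would run through a generating set of $\mathcal{O}_K^*$. By Dirichlet's theorem $\mathcal{O}_K^*$ has rank $3$, and by Kubota's unit index theorem $\{\pm 1\}\langle u_1,u_2,u_3\rangle$ has index $1$, $2$, or $4$ in $\mathcal{O}_K^*$. For each $u_i$ with $N(u_i)=1$, the identity $\sigma(u_i+1)/(u_i+1)=1/u_i$ (immediate from $\sigma(u_i)=1/u_i$) gives $u_i=(u_i+1)^2/N(u_i+1)$, whence $[u_i]\mapsto[a_i]$ with $a_i=N(u_i+1)$. When $N(u_i)=-1$, the ratio $\sigma(u_i)/u_i=-1/u_i^2$ is not a square in the totally real field $K$, so $[u_i]$ fails to be $G$-fixed and contributes nothing, matching $a_i=1$. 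For any Kubota ``extra'' unit $w$ satisfying $w^2\in\{\pm 1\}\langle u_1,u_2,u_3\rangle$, the explicit form $u_i=z_i+t_i\sqrt{\Delta_i}$ shows that extracting a square root pulls in $\sqrt{\Delta_i}$-factors, so its image already lies in $\langle\Delta_1,\Delta_2,\Delta_3\rangle$. Assembling the pieces yields $H[2]\cong\langle\Delta_1,\Delta_2,\Delta_3,a_1,a_2,a_3\rangle$ in $\mathbb{Q}^*/(\mathbb{Q}^*)^2$, as claimed.

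The main obstacle is the case analysis required to handle the Kubota extra unit, which depends on the triple $(N(u_1),N(u_2),N(u_3))\in\{\pm 1\}^3$: one must enumerate the possible forms of $w$ and verify in each case that its image lies in $\langle\Delta_1,\Delta_2,\Delta_3\rangle$, and correspondingly check that the connecting-map obstruction in $H^2(G,\mu_2)$ vanishes for the lifts produced by the generating units, so that the extension splits off cleanly into the two pieces identified above.
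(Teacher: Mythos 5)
The paper offers no proof of this lemma to compare against: it is quoted directly from Zantema \cite{zantema}. So your attempt has to stand on its own. The skeleton you set up is reasonable --- splitting the squaring sequence on $\mathcal{O}_{K}^{*}$, identifying $H^{1}(G,\mu_{2}) \cong \mathrm{Hom}(G,\mu_{2})$ with $\langle [\Delta_{1}],[\Delta_{2}],[\Delta_{3}]\rangle$ by Kummer theory, and sending a $G$-fixed unit class $[u]$ to a rational class $d$ with $K(\sqrt{u}) = K(\sqrt{d})$. The two computations you do carry out are correct and are the heart of where the $a_{i}$ come from: $u_{i} = (u_{i}+1)^{2}/N(u_{i}+1)$ when $N(u_{i}) = 1$, and $\sigma(u_{i})/u_{i} = -u_{i}^{-2}$ is a non-square in a totally real field when $N(u_{i}) = -1$.

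But the write-up stops exactly where the proof gets hard, in two places. First, for a $G$-fixed class $[u] \in \bigl(\mathcal{O}_{K}^{*}/(\mathcal{O}_{K}^{*})^{2}\bigr)^{G}$, the extension $K(\sqrt{u})/\mathbb{Q}$ is Galois but \emph{not} automatically elementary abelian of exponent $2$: the Galois group can be $D_{4}$, $Q_{8}$ or $\mathbb{Z}/4\mathbb{Z} \times \mathbb{Z}/2\mathbb{Z}$, and the obstruction is precisely the class in $H^{2}(G,\mu_{2})$ you mention. So your ``embedding'' $[u] \mapsto [d]$ is defined only on the subgroup where that obstruction vanishes (and $[d]$ is unique only modulo $\langle [\Delta_{1}],[\Delta_{2}],[\Delta_{3}]\rangle$, not in $\mathbb{Q}^{*}/(\mathbb{Q}^{*})^{2}$ itself); you flag this issue twice but never discharge it, and the exact statement of the lemma silently encodes the answer to exactly this question. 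Second, the Kubota units: the assertion that any $w$ with $w^{2} \in \{\pm 1\}\langle u_{1},u_{2},u_{3}\rangle$ contributes nothing beyond $\langle [\Delta_{1}],[\Delta_{2}],[\Delta_{3}]\rangle$ because ``extracting a square root pulls in $\sqrt{\Delta_{i}}$-factors'' is not an argument. Whether $[w]$ is even $G$-fixed, whether its $H^{2}$-obstruction vanishes, and which rational class it descends to all depend on the norm signature $(N(u_{1}),N(u_{2}),N(u_{3}))$ and on which of Kubota's index types occurs (and that index can be $1$, $2$, $4$ \emph{or} $8$, not just up to $4$ as you state). Since this case analysis is precisely what rules out $H[2]$ being strictly larger than $\langle \Delta_{1},\Delta_{2},\Delta_{3},a_{1},a_{2},a_{3}\rangle$, the proof is incomplete at its decisive step.
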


The next lemma is very crucial for proving Theorem \ref{main-TH} and we furnish a detailed proof here.

\begin{lemma}\label{our-lemma}
Let $t \geq 2$ be an integer. For every unordered pair $(i,j)$ with $1 \leq i \neq j \leq t$, let $\varepsilon_{ij} \in \{\pm {1}\}$ be given. Then there exist infinitely many prime numbers $p_{1},\ldots ,p_{t}$ with $p_{i} \equiv 1 \pmod {8}$ for each $i$, and the Legendre symbol $\left(\frac{p_{i}}{p_{j}}\right) = \varepsilon_{ij}$ for all $i \neq j$.
\end{lemma}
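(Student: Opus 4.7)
The plan is to build the primes $p_1, p_2, \ldots, p_t$ one at a time by induction on $k$, combining Dirichlet's theorem on primes in arithmetic progressions with quadratic reciprocity. The hypothesis $p_i \equiv 1 \pmod 8$ is what makes this maneuver work cleanly, because it forces $p_i \equiv 1 \pmod 4$ and hence, by quadratic reciprocity, $\left(\frac{p_i}{p_j}\right) = \left(\frac{p_j}{p_i}\right)$; this symmetry is exactly compatible with the indexing by unordered pairs.

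For the base case, pick any prime $p_1 \equiv 1 \pmod 8$, of which there are infinitely many by Dirichlet. Inductively, suppose distinct primes $p_1, \ldots, p_{k-1}$ have been chosen so that $p_i \equiv 1 \pmod 8$ for each $i$ and $\left(\frac{p_i}{p_j}\right) = \varepsilon_{ij}$ for all $1 \leq i \neq j \leq k-1$. To find $p_k$, I need $p_k \equiv 1 \pmod 8$ together with $\left(\frac{p_i}{p_k}\right) = \varepsilon_{ik}$ for each $i < k$. Using $p_i \equiv 1 \pmod 4$ and quadratic reciprocity, the condition $\left(\frac{p_i}{p_k}\right) = \varepsilon_{ik}$ rewrites as $\left(\frac{p_k}{p_i}\right) = \varepsilon_{ik}$, i.e.\ $p_k$ must lie in a specified union of $(p_i-1)/2$ nonzero residue classes modulo $p_i$ (the quadratic residues or nonresidues, depending on $\varepsilon_{ik}$).

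Since the moduli $8, p_1, \ldots, p_{k-1}$ are pairwise coprime, the Chinese remainder theorem allows me to combine the requirement $p_k \equiv 1 \pmod 8$ with the requirements modulo each $p_i$ into a union of residue classes modulo $M := 8 p_1 \cdots p_{k-1}$, each of which is coprime to $M$ and nonempty. Dirichlet's theorem then supplies infinitely many primes $p_k$ in any one of these classes; in particular I can choose $p_k \notin \{p_1, \ldots, p_{k-1}\}$ and distinct from any previously used prime, producing an infinite supply of valid $p_k$ at each step.

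The main potential obstacle is making sure nothing collapses: that the prescribed residue-class set modulo $M$ is nonempty and contains units. Nonemptiness is immediate from CRT since each of the individual conditions modulo $8$ and modulo $p_i$ is nonempty; coprimality to $M$ is automatic because each chosen class modulo $p_i$ excludes $0$ and the class modulo $8$ is $1$. Hence the inductive construction proceeds for all $k \leq t$, and moreover the freedom at each step (infinitely many valid $p_k$) yields infinitely many $t$-tuples $(p_1, \ldots, p_t)$ satisfying all the prescribed conditions.
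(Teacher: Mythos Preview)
Your proof is correct and follows essentially the same approach as the paper: build the primes inductively, at each step use the Chinese remainder theorem to combine the congruence $p_k \equiv 1 \pmod 8$ with the prescribed quadratic-residue conditions modulo $p_1,\ldots,p_{k-1}$, and then invoke Dirichlet's theorem to obtain infinitely many admissible $p_k$. Your explicit mention of quadratic reciprocity to justify the symmetry $\left(\frac{p_i}{p_j}\right)=\left(\frac{p_j}{p_i}\right)$ (needed for the unordered-pair indexing) is a nice touch that the paper leaves implicit.
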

\begin{proof}
We prove by induction on $t$. We begin with $t = 2$. If $\varepsilon_{12} = 1$, we take $p_{1} = 17$ and $p_{2} \equiv 1 \pmod {8p_{1}}$. On the other hand, if $\varepsilon_{12} = -1$, we take $p_{2} \equiv 3 \pmod {8p_{1}}$. By Dirichlet's theorem for primes in arithmetic progression, we obtain infinitely many choices for $p_{2}$ in both the cases. Therefore, the lemma holds for $t = 2$.


\smallskip

Now, suppose that we have prime numbers $p_{1},\ldots ,p_{t - 1}$ such that $p_{i} \equiv 1 \pmod {8}$ for each $i \in \{1,\ldots ,t - 1\}$ and $\left(\frac{p_{i}}{p_{j}}\right) = \varepsilon_{ij}$ for all $1 \leq i \neq j \leq t - 1$. We wish to find a prime number $p_{t}$ with $\left(\frac{p_{t}}{p_{i}}\right) = \varepsilon_{it}$ for all $1 \leq i \leq t - 1$. For each integer $i \in \{1,\ldots ,t - 1\}$, we choose integers $n_{i}$ such that $1 \leq n_{i} \leq p_{i} - 1$ and $\left(\frac{n_{i}}{p_{i}}\right) = \varepsilon_{it}$. We consider the following system of congruences.

\begin{eqnarray*}
X &\equiv & 1 \pmod {8}\\
X &\equiv & n_{1} \pmod {p_{1}}\\
\vdots \\
X &\equiv & n_{t - 1} \pmod {p_{t - 1}}.
\end{eqnarray*}

By the Chinese remainder theorem, there exists a unique integer, say $x_{0}$, modulo $8p_{1}\ldots p_{t - 1}$ satisfying the above system of congruences. Since $\gcd(x_{0},8p_{1}\ldots p_{t - 1}) = 1$, by Dirichlet's theorem for primes in arithmetic progression, there exist infinitely many prime numbers $\ell$ such that $\ell \equiv x_{0} \pmod {8p_{1}\ldots p_{t - 1}}$. Then $\ell \equiv 1 \pmod {8}$ and $\left(\frac{\ell}{p_{i}}\right) = \left(\frac{x_{0}}{p_{i}}\right) = \left(\frac{n_{i}}{p_{i}}\right) = \varepsilon_{it}$ for each $i \in \{1,\ldots ,t - 1\}$. Taking $p_{t} = \ell$, we conclude that there are infinitely many such prime numbers fulfilling the desired requirements.
\end{proof}

The next lemma, due to Trotter \cite{trotter}, provides a useful criterion for a real quadratic field of a particular type to have a fundamental unit of negative norm.

\begin{lemma} \cite{trotter}\label{trotter}
Let $p_{1},\ldots ,p_{t}$ be prime numbers with $p_{i} \equiv 1 \pmod {4}$ for each $i \in \{1,\ldots ,t\}$. Let $\varepsilon_{ij} = \left(\frac{p_{i}}{p_{j}}\right)$ for $1 \leq i \neq j \leq t$. Then each of the following two conditions is sufficient to ensure that the fundamental unit of the real quadratic field $\mathbb{Q}(\sqrt{p_{1}\cdots p_{t}})$ to have norm $-1$.
\begin{enumerate}
\item $t$ is odd and $\varepsilon_{ij} = -1$ for all $1 \leq i \neq j \leq t$.

\item $t$ is even, $\varepsilon_{12} = -1$, $\varepsilon_{1i} = 1$ for all $i > 2$ and $\varepsilon_{ij} = -1$ for all $2 \leq i \neq j \leq t$.
\end{enumerate}
\end{lemma}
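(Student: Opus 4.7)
My plan is to use a classical descent to reduce the claim to a combinatorial nonexistence assertion about partitions of $\{1, \ldots, t\}$, and then verify that this nonexistence holds under either hypothesis (1) or (2).

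\textbf{Step 1: descent.} Suppose, for contradiction, that $N(\varepsilon) = +1$. Writing $\varepsilon = (u + v\sqrt{d})/2$ with $d = p_1 \cdots p_t$ and $u^2 - d v^2 = 4$ for positive integers $u, v$, the identity $(u-2)(u+2) = dv^2$, combined with $\gcd(u-2, u+2) \mid 4$, the oddness and squarefreeness of $d$, and an elementary parity analysis on $(u, v)$, yields a nontrivial coprime factorization $d = ef$ (with $1 < e, f < d$) together with positive integers $s, t_0$ satisfying $f t_0^2 - e s^2 = \pm 4$. Reducing modulo each prime $p \mid e$ (and noting $p \nmid t_0$, else $p \mid 4$) gives $\pm f \equiv \square \pmod{p}$; since $p \equiv 1 \pmod 4$ makes $-1$ a square mod $p$, we conclude $(f/p) = 1$. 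Writing $A = \{i : p_i \mid e\}$ and $B = \{i : p_i \mid f\}$, this says $\prod_{j \in B}\varepsilon_{ij} = 1$ for every $i \in A$; by quadratic reciprocity together with $p_i \equiv 1 \pmod 4$, also $\prod_{i \in A}\varepsilon_{ij} = 1$ for every $j \in B$. So it suffices to show that no nontrivial partition of $\{1, \ldots, t\}$ into $A \sqcup B$ satisfies both product relations.

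\textbf{Step 2: ruling out the partition in case (1).} With $\varepsilon_{ij} = -1$ for all $i \ne j$, the relation for any $i \in A$ becomes $(-1)^{|B|} = 1$, so $|B|$ is even; symmetrically $|A|$ is even. Then $t = |A| + |B|$ is even, contradicting the hypothesis that $t$ is odd.

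\textbf{Step 3: ruling out the partition in case (2).} If $1$ and $2$ lie in opposite parts, say $1 \in A$ and $2 \in B$, then $\prod_{j \in B}\varepsilon_{1j} = \varepsilon_{12} \cdot \prod_{j \in B,\, j \ge 3}\varepsilon_{1j} = (-1)(1) = -1 \ne 1$, a contradiction. So WLOG $\{1, 2\} \subseteq A$. The $i = 2$ relation reads $\prod_{j \in B}\varepsilon_{2j} = (-1)^{|B|} = 1$ (since $\varepsilon_{2j} = -1$ for every $j \ge 3$), forcing $|B|$ even. For any $j \in B$ (necessarily $j \ge 3$), the relation gives $\varepsilon_{1j}\varepsilon_{2j}\prod_{i \in A,\, i \ge 3}\varepsilon_{ij} = (1)(-1)(-1)^{|A|-2} = (-1)^{|A|-1} = 1$, forcing $|A|$ odd. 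Then $t = |A| + |B|$ is odd, contradicting $t$ even.

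\textbf{Main obstacle.} The substantive ingredient is Step~1, the classical descent extracting a nontrivial coprime factorization of $d$ from the assumption $N(\varepsilon) = +1$; this requires a careful case split on the parities of $u$ and $v$ (and on whether $\varepsilon$ lies in $\mathbb{Z}[\sqrt d]$ or only in $\mathcal{O}_K$). Once the partition formulation is in hand, Steps~2 and~3 are routine combinatorics.
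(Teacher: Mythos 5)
The paper offers no proof of this lemma: it is quoted directly from Trotter \cite{trotter}, so there is no in-paper argument to compare yours against. What you have written is essentially a reconstruction of the classical proof of Trotter's criterion --- descend from $N(\varepsilon)=+1$ to a nontrivial coprime factorization $d=ef$ with $ft_0^2-es^2=\pm 1$ or $\pm 4$, extract quadratic-residue relations between the two blocks of primes, and then eliminate every nontrivial partition combinatorially. Steps 2 and 3 are correct as written, and the overall plan is sound.

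Two points in Step 1 need repair before this is a proof. First, the relations $\prod_{i \in A}\varepsilon_{ij}=1$ for $j \in B$ do \emph{not} follow ``by quadratic reciprocity'' from the relations $\prod_{j \in B}\varepsilon_{ij}=1$ for $i \in A$: reciprocity only gives the entrywise symmetry $\varepsilon_{ij}=\varepsilon_{ji}$, and knowing that all row products of the matrix $\left(\varepsilon_{ij}\right)_{i \in A,\, j \in B}$ equal $1$ does not force the column products to equal $1$. The correct derivation is the symmetric one: reduce $ft_0^2-es^2=\pm 4$ modulo each $p \mid f$ to get $\left(\frac{-e}{p}\right)=1$, hence $\left(\frac{e}{p}\right)=1$ since $p \equiv 1 \pmod 4$. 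This second family of relations is genuinely needed --- you use it in Step 2 to conclude $|A|$ is even, and in Step 3 both for the case $1 \in B$, $2 \in A$ and to force $|A|$ odd --- so the misattributed justification is a real gap, though a one-line fix. Second, the nontriviality $1<e,f<d$ of the factorization is not a consequence of parity analysis alone: ruling out the two trivial factorizations uses, respectively, that $N(\varepsilon)=+1$ precludes the existence of any unit of norm $-1$, and that the fundamental unit cannot be the square of another unit. With those two repairs (and the case split on the parities of $u,v$ carried out, which works exactly as you anticipate), your outline becomes a complete proof.
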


\section{Proof of Theorem \ref{main-TH}}

Let $t \geq 3$ be an odd integer and let $p_{1}\cdots p_{t}$ be prime numbers with $p_{i} \equiv 1 \pmod {8}$ and $\left(\frac{p_{i}}{p_{j}}\right) = -1$ for all $1 \leq i \neq j \leq t$. Lemma \ref{our-lemma} allows us to choose such prime numbers. Let $K = \mathbb{Q}(\sqrt{2},\sqrt{p_{1}\cdots p_{t}})$ with three quadratic subfields $K_{1} = \mathbb{Q}(\sqrt{2}), K_{2} = \mathbb{Q}(\sqrt{p_{1}\cdots p_{t}})$ and $K_{3} = \mathbb{Q}(\sqrt{2p_{1}\cdots p_{t}})$. The only ramified primes in $K/\mathbb{Q}$ are $2$ and $p_{i}$'s. Moreover, since $K = K_{1}K_{2}$ and $K_{1} \cap K_{2} = \mathbb{Q}$, we conclude that the ramification indices of all these primes are $2$. Therefore, by Proposition \ref{zantema}, $H^{1}(G,\mathcal{O}_{K}^{*})$ embeds into $\displaystyle\bigoplus_{i = 1}^{t + 1}\mathbb{Z}/2\mathbb{Z}$, where $G$ is the Galois group $Gal(K/\mathbb{Q})$. Now, we prove that $|H^{1}(G,\mathcal{O}_{K}^{*})| = 4$, which, by Proposition \ref{zantema}, implies that $$|Po(K)| = \frac{\left|\displaystyle\bigoplus_{i = 1}^{t + 1}\mathbb{Z}/2\mathbb{Z}\right|}{|H^{1}(G,\mathcal{O}_{K}^{*})|} = \frac{2^{t + 1}}{4} = 2^{t - 1}.$$

\smallskip

We note that the rational prime $2$ is not totally ramified in $K/\mathbb{Q}$. Therefore, by Lemma \ref{lem-1}, we get that $H[2] = H^{1}(G,\mathcal{O}_{K}^{*})$. Following the notations of Lemma \ref{lem-2}, we get that $\Delta_{1} = 2$, $\Delta_{2} = p_{1}\cdots p_{t}$ and $\Delta_{3} = 2p_{1}\cdots p_{t}$. Using the notation $[x]$ to denote the image of a non-zero element $x \in \mathbb{Q}$ in $\mathbb{Q}^{*}/(\mathbb{Q}^{*})^{2}$, we see that $[\Delta_{1}],[\Delta_{2}],[\Delta_{3}] \in \langle [2], [p_{1}\cdots p_{t}] \rangle$ in $\mathbb{Q}^{*}/(\mathbb{Q}^{*})^{2}$. We note that $1 + \sqrt{2}$ is a fundamental unit of $K_{1}$ with negative norm. Also, by Lemma \ref{trotter}, the fundamental unit of $K_{2}$ has negative norm and consequently, $[a_{1}] \mbox{ and } [a_{2}]$ are trivial in $\mathbb{Q}^{*}/(\mathbb{Q}^{*})^{2}$.

\smallskip

Now, let $u = z + w\sqrt{2p_{1}\cdots p_{t}}$ be a fundamental unit of $K_{3}$. If $N(u) = -1$, then $[a_{3}] \in \langle [2],[p_{1}\cdots p_{t}] \rangle$ and hence $|H^{1}(G,\mathcal{O}_{K}^{*})| = 4$. Otherwise, we assume that $N(u) = 1$. Then by Lemma \ref{lem-2}, we have $a_{3} = N(u + 1) = 2(z + 1)$. Since $z^{2} - 1 = 2w^{2}p_{1}\cdots p_{t}$ is even, we conclude that $z$ is odd and therefore $\gcd(z - 1,z + 1) = 2$. Therefore, $w = 2w_{1}$ for some integer $w_{1}$ and hence
\begin{equation}\label{eq-refer}
\frac{z - 1}{2}\cdot \frac{z + 1}{2} = 2w_{1}^{2}p_{1}\cdots p_{t}.
\end{equation}
Since $\gcd\left(\frac{z - 1}{2},\frac{z + 1}{2}\right) = 1$, from equation \eqref{eq-refer}, we get that $\frac{z - 1}{2} = a^{2}\alpha$ and $\frac{z + 1}{2} = b^{2}\beta$, where $a,b$ are relatively prime integers with $ab = k$ and $\alpha,\beta$ are relatively prime integers with $\alpha\beta = 2p_{1}\cdots p_{t}$. Thus we have $[2(z + 1)] = [4b^{2}\beta] = [\beta] \in \mathbb{Q}^{*}/(\mathbb{Q}^{*})^{2}$. If $\beta = 1 \mbox{ or } 2 \mbox{ or } p_{1}\cdots p_{t} \mbox{ or } 2p_{1}\cdots p_{t}$, then $[\beta] \in \langle [2],[p_{1}\cdots p_{t}] \rangle$. We now prove that other choices cannot occur for $\beta$.

\medskip

\noindent
{\bf Case 1.} $\beta = 2B$, where $B$ is an integer dividing $p_{1}\cdots p_{t}$ and consisting of an odd number of prime divisors. Then for a prime divisor $\ell$ of $\alpha$, from the equation 
\begin{equation}\label{mid-equn}
1 = \frac{z + 1}{2} - \frac{z - 1}{2} = b^{2}\beta - a^{2}\alpha,
\end{equation}
we have $$1 = \left(\frac{1}{\ell}\right) = \left(\frac{2b^{2}B - a^{2}\alpha}{\ell}\right) = \left(\frac{2}{\ell}\right)\left(\frac{B}{\ell}\right) = \left(\frac{B}{\ell}\right) = -1,$$ which is a contradiction. 

\medskip

\noindent
{\bf Case 2.} $\beta = 2B$, where $B$ is an integer dividing $p_{1}\cdots p_{t}$ and consisting of an even number of prime divisors. Then $\alpha$ consists of an odd number of prime divisors, since $t$ is odd. Therefore, for an odd prime divisor $\ell$ of $\beta$, we obtain $$1 = \left(\frac{1}{\ell}\right) = \left(\frac{2b^{2}B - a^{2}\alpha}{\ell}\right) = \left(\frac{-\alpha}{\ell}\right) = \left(\frac{\alpha}{\ell}\right) = -1,$$ which is a contradiction.

\medskip

\noindent
{\bf Case 3.} $\alpha = 2A$, where $A$ is an integer dividing $p_{1}\cdots p_{t}$ and consisting of an even number of prime divisors. Then $\beta$ consists of an odd number of prime factors. Therefore, for an odd prime divisor $\ell$ of $\alpha$, we get $$1 = \left(\frac{1}{\ell}\right) = \left(\frac{b^{2}\beta - 2a^{2}A}{\ell}\right) = \left(\frac{\beta}{\ell}\right) = -1,$$ which is a contradiction.

\medskip

\noindent
{\bf Case 4.} $\alpha = 2A$, where $A$ is an integer dividing $p_{1}\cdots p_{t}$ and consisting of an odd number of prime divisors. Then for a prime divisor $\ell$ of $\beta$, we obtain $$1 = \left(\frac{1}{\ell}\right) = \left(\frac{b^{2}\beta - 2a^{2}A}{\ell}\right) = \left(\frac{-1}{\ell}\right)\left(\frac{2}{\ell}\right)\left(\frac{A}{\ell}\right) = \left(\frac{A}{\ell}\right) = -1,$$ which is a contradiction.

\smallskip

Consequently, we have $[a_{3}] = [2(z + 1)] = [\beta] \in \langle [2],[p_{1}\cdots p_{t}] \rangle$ and hence by Lemma \ref{lem-1} and Lemma \ref{lem-2}, we have $|H^{1}(G,\mathcal{O}_{K}^{*})| = |H[2]| = |\langle [2],[p_{1}\cdots p_{t}] \rangle| = 4$. Hence $|Po(K)| = 2^{t - 1}$. 

\medskip

Now, we assume that $t$ is even. By Lemma \ref{our-lemma}, we can choose prime numbers $p_{1},\ldots ,p_{t}$ such that $p_{i} \equiv 1 \pmod {8}$ for each $i$ and they satisfy condition $(2)$ of Lemma \ref{trotter}. Following the same notation as above, we see that $[\Delta_{1}], [\Delta_{2}], [\Delta_{3}], [a_{1}], [a_{2}] \in \langle [2],[p_{1}\cdots p_{t}] \rangle$. If $N(u) = -1$, then we get that $|H^{1}(G,\mathcal{O}_{K}^{*})| = |H[2]| = |\langle [2],[p_{1}\cdots p_{t}] \rangle| = 4$. We now prove that for $N(u) = 1$ also, we have $[a_{3}] \in \langle [2],[p_{1}\cdots p_{t}] \rangle$. For that, we observe that if $\beta = 1 \mbox{ or } 2 \mbox{ or } p_{1}\cdots p_{t} \mbox{ or } 2p_{1}\cdots p_{t}$, then we are through. We prove that no other possibilities can occur.

\smallskip

\noindent
{\bf Case 1.} $p_{1} \mid \beta$ and $p_{2} \mid \alpha$. Then from equation \eqref{mid-equn}, we get $$1 = \left(\frac{1}{p_{1}}\right) = \left(\frac{-a^{2}\alpha}{p_{1}}\right) = \left(\frac{\alpha}{p_{1}}\right) = \left(\frac{p_{2}}{p_{1}}\right) = -1,$$ by the conditions on $p_{i}$'s. Therefore, this cannot occur.

\medskip

\noindent
{\bf Case 2.} $p_{1} \mid \alpha$ and $p_{2} \mid \beta$. Then again from equation \eqref{mid-equn}, we get $$1 = \left(\frac{1}{p_{1}}\right) = \left(\frac{b^{2}\beta}{p_{1}}\right) = \left(\frac{\beta}{p_{1}}\right) = \left(\frac{p_{2}}{p_{1}}\right) = -1, $$ which is a contradiction

\medskip

\noindent
{\bf Case 3.} $\beta = p_{1}p_{2}B \mbox{ or } 2p_{1}p_{2}B$, where $B \mid p_{1}\cdots p_{t}$ and consists of an even number of odd prime divisors. Then $\alpha$ also comprises with an even number of odd prime factors and therefore from equation \eqref{mid-equn}, we obtain for any odd prime divisor $\ell$ of $\alpha$ that $1 = \left(\frac{1}{\ell}\right) = \left(\frac{2}{\ell}\right)\left(\frac{p_{1}}{\ell}\right)\left(\frac{p_{2}}{\ell}\right)\cdot 1 = -1$, which is a contradiction. Similarly, if $B$ consists of an odd number of odd prime divisors, then so does $\alpha$. Again, from equation \eqref{mid-equn}, we get $1 = \left(\frac{1}{p_{2}}\right) = \left(\frac{-a^{2}\alpha}{p_{2}}\right) = \left(\frac{\alpha}{p_{2}}\right) = -1$, a contradiction.

\medskip

\noindent
{\bf Case 4.} $p_{1}p_{2} \nmid \beta$. Then $p_{1}p_{2} \mid \alpha$ and considering the equation $-1 = a^{2}\alpha - b^{2}\beta$, we are essentially back to Case 3.

\smallskip

Hence we conclude that for $t$ even as well, we have infinitely many bi-quadratic fields $K$ with $|Po(K)| = 2^{t - 1}$. Putting $n = t - 1$ completes the proof of theorem \ref{main-TH}. $\hfill\Box$

\bigskip

{\bf Acknowledgements.} It is a pleasure for the first author to thank Indian Institute of Technology Guwahati for providing the financial support and necessary facilities to carry out this research work. The second author would like to thank MATRICS, SERB for their research grant.

\end{document}